\newtheorem{theorem}{Theorem}
\newtheorem{lemma}[theorem]{Lemma}
\newtheorem{coro}[theorem]{Corollary}
\def\acknowledgment{\par\addvspace{17pt}\small\rmfamily
\trivlist\if!\ackname!\item[]\else
\item[\hskip\labelsep
{\bfseries\ackname}]\fi}
\def\C{\mathbb{C}}
\def\R{\mathbb{R}}
\def\D{\mathbb{D}}
\newcommand{\du}{\mathrm{d}}
\begin{document}
\title{On $J$-holomorphic variational vector fields and extremal discs}

\author{Uro\v s Kuzman}

\address[Uro\v s Kuzman]{Faculty of Mathematics and Physics, University of Ljubljana, 
and Institute of Mathematics, Physics and Mechanics, Jadranska 19, 
1000 Ljubljana, Slovenia, uros.kuzman@fmf.uni-lj.si}


%

\begin{abstract}
We prove that every $J$-holomorphic variational vector field can be realized as derivation $\frac{d}{dt}_{|t=0}f_t$ where $(f_t)$ is a one parametric family of $J$-holomorphic discs. Furthermore, we discuss properness of an extremal $J$-holomorphic disc in a bounded pseudoconvex domain.  
\end{abstract}
\maketitle

\section{Introduction}
Let $\D \subset \C$ be the unit disc and $J$ a smooth almost complex structure defined on $\R^{2n}$, that is, a matrix function satisfying $J^2 = -id$. We denote by $J_{st}$ the standard structure on $\R^{2n}$ corresponding to the multiplication by the imaginary unit and call a $\mathcal C^{1}$ map $f\colon \D\to \R^{2n}$ a $J$-holomorphic disc if for all $\zeta=x+iy \in \D$ we have 
\begin{equation}\label{JvR}
df(\zeta) \circ J_{st} = J(f(\zeta)) \circ df(\zeta) \Longleftrightarrow f_x+J(f)f_y=0.
\end{equation}
In this paper, we discuss variations of such discs. 

Let $(f_t)_{t\in\R}$ be a one parameter family of $J$-holomorphic discs in $\R^{2n}$. We denote by $f=f_0$. It follows from (\ref{JvR}) that the vector field
$$V(\zeta):=\frac{\mathrm{d}}{\mathrm{d}t}_{|_{t=0}}f_t(\zeta)$$
satisfies the following equation
\begin{equation}\label{VF}
V_x+J(f)V_y+\textrm{d}_fJ(V)f_y=0,
\end{equation}
where $\textrm{d}_fJ(V)$ denotes the linearization of $g\to J(g)$ at $f$.  We call a $\mathcal{C}^1$-smooth solution of (\ref{VF}) a \emph{$J$-holomorphic variational vector field along $f$}. 
We prove that every such vector field can be realized as a derivation of a one parametric family of $J$-holomorphic discs.  

\begin{theorem}
Assume that $\det(J+J_{st})\neq 0$. Let $f\in \mathcal{C}^{1,\alpha}(\overline{\D})$ be a $J$-holomorphic disc and let $V\in \mathcal{C}^{1,\alpha}(\overline{\D})$ be a $J$-holomorphic variational vector field along $f$. There exists a family of $J$-holomorphic discs $(f_t)\subset \mathcal{C}^{1,\alpha}(\overline{\D})$ such that $V=\frac{\mathrm{d}}{\mathrm{d}t}_{|_{t=0}}f_t$ and $f_0=f$.  
\end{theorem}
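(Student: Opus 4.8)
The plan is to realize the desired family as a correction of the naive linear path $t \mapsto f + tV$, via the implicit function theorem in Hölder spaces. Introduce the nonlinear operator
$$\Phi\colon \mathcal{C}^{1,\alpha}(\ol{\D},\R^{2n}) \to \mathcal{C}^{0,\alpha}(\ol{\D},\R^{2n}), \qquad \Phi(g) = g_x + J(g)\,g_y.$$
Since $J$ is smooth, $\Phi$ is a smooth map of Banach spaces, its zero set is precisely the set of $J$-holomorphic discs, and the hypothesis $f_x + J(f) f_y = 0$ reads $\Phi(f) = 0$. A direct computation shows that the Fréchet derivative at $f$ is
$$L := D\Phi(f)\colon V \longmapsto V_x + J(f)\,V_y + \mathrm{d}_fJ(V)\,f_y,$$
so by \eqref{VF} the $J$-holomorphic variational vector fields along $f$ are exactly the elements of $\ker L$; in particular $LV = 0$ for our given $V$. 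It therefore suffices to produce, for $t$ near $0$, a solution $f_t = f + tV + h_t$ of $\Phi(f_t) = 0$ with $h_t = O(t^2)$ in $\mathcal{C}^{1,\alpha}$, since then $f_0 = f$ and $\frac{\mathrm{d}}{\mathrm{d}t}_{|_{t=0}} f_t = V$.

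The key analytic input, and the main obstacle, is a bounded right inverse for $L$. This is where the ellipticity hypothesis $\det(J + J_{st}) \neq 0$ enters: it allows the $J$-holomorphy equation, and with it the linearized equation $Lu = w$, to be rewritten (identifying $\R^{2n} \cong \C^n$) as a generalized Beltrami system whose principal part is conjugate to the Cauchy--Riemann operator $\debar$. On the disc such a first-order elliptic system carries no topological obstruction to solvability: appealing to the Cauchy--Green transform together with the similarity principle of Vekua's theory of generalized analytic functions, the inhomogeneous problem $Lu = w$ is solvable for every $w \in \mathcal{C}^{0,\alpha}(\ol{\D})$, and one obtains a bounded linear operator
$$R\colon \mathcal{C}^{0,\alpha}(\ol{\D},\R^{2n}) \to \mathcal{C}^{1,\alpha}(\ol{\D},\R^{2n}), \qquad LR = \mathrm{id}.$$
Establishing this right inverse with a uniform Hölder bound --- controlling both the variable coefficients $J(f)$ and $\mathrm{d}_fJ(\cdot)$ and the gain of one derivative in the Hölder scale --- is the technical heart of the argument.

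With $R$ in hand the correction is routine. Taylor expanding $\Phi$ about $f$ gives $\Phi(f + u) = \Phi(f) + Lu + Q(u)$, where $Q(u)$ collects the terms of order $\geq 2$ in $u$; since $J$ is smooth one has the quadratic estimate $\|Q(u)\|_{0,\alpha} = O(\|u\|_{1,\alpha}^2)$ and $DQ(0) = 0$. Writing the sought solution as $f_t = f + tV + Rw_t$ with $w_t \in \mathcal{C}^{0,\alpha}$ and using $\Phi(f)=0$, $LV=0$, $LR = \mathrm{id}$, the equation $\Phi(f_t)=0$ becomes the fixed-point equation
$$w_t = -\,Q\bigl(tV + Rw_t\bigr).$$
For $|t|$ small the right-hand side maps a small ball of $\mathcal{C}^{0,\alpha}$ into itself and is a contraction there, so it has a unique fixed point $w_t$; moreover the map $(t,w) \mapsto w + Q(tV + Rw)$ is smooth with invertible $w$-derivative at $(0,0)$, so the implicit function theorem yields a smooth curve $t \mapsto w_t$ with $w_0 = 0$. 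Differentiating the fixed-point equation at $t=0$ and using $DQ(0)=0$ gives $\dot w_0 = 0$, hence $h_t := Rw_t = O(t^2)$. Then $f_t = f + tV + Rw_t$ is the required family: it consists of $J$-holomorphic discs in $\mathcal{C}^{1,\alpha}(\ol{\D})$, satisfies $f_0 = f$, and $\frac{\mathrm{d}}{\mathrm{d}t}_{|_{t=0}} f_t = V + R\dot w_0 = V$.
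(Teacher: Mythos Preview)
Your strategy coincides with the paper's: both are implicit-function-theorem arguments whose only nontrivial input is the linear solvability of $Lu=w$ in $\mathcal{C}^{1,\alpha}$. The paper packages this slightly differently, building a ``straightening'' operator $\tilde{\mathcal{F}}\colon\mathcal{C}^{1,\alpha}\to\mathcal{C}^{1,\alpha}$ (the Cauchy--Green transform plus a finite-rank holomorphic correction) which is a local diffeomorphism near $f$ and sends $J$-holomorphic discs to standard holomorphic maps; the family is then $f_t=\tilde{\mathcal{F}}^{-1}\bigl(\tilde{\mathcal{F}}(f)+t\,d_f\tilde{\mathcal{F}}(V)\bigr)$. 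Your version works directly with the equation $\Phi=0$ and a right inverse of $L$. The two are equivalent: after reducing to $A(f)=0$ via the graph trick one has $\bar\partial\circ d_f\tilde{\mathcal{F}}=L$, so the invertibility of $d_f\tilde{\mathcal{F}}$ is exactly what manufactures your bounded $R$ (indeed $R=(d_f\tilde{\mathcal{F}})^{-1}\circ T$ works).

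The one place where your write-up is too loose is precisely the sentence you call the ``technical heart''. Vekua's similarity principle is a statement about \emph{scalar} generalized analytic functions; for $n>1$ the linearized equation is a coupled system of generalized analytic \emph{vectors}, and showing that the cokernel of $d_f\mathcal{F}=\mathrm{id}+T(B_1\cdot+B_2\overline{\,\cdot\,})$ is spanned by holomorphic functions (hence that $L$ is surjective onto $\mathcal{C}^{0,\alpha}$) needs the vector-valued theory of Bojarski as applied by Sukhov--Tumanov, which is exactly what the paper invokes in its Lemma~\ref{lemma4}. You should also be explicit that surjectivity of $L$ alone does not guarantee a \emph{bounded linear} right inverse in a Banach space; one needs $\ker L$ to be complemented. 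Here it is, because $\ker L=(d_f\tilde{\mathcal{F}})^{-1}(H)$ and $H$ is complemented in $\mathcal{C}^{1,\alpha}$ by $T(\mathcal{C}^{0,\alpha})$, but this deserves a sentence rather than an appeal to abstract ellipticity.
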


\noindent We prove this statement in \S 2 using non-linear techniques developped in \cite{BERKUZ} and linear theory from \cite{ST}.  In \S3 a refined version of this theorem is used in the following application concerning extremal discs. 

Let $\Omega$ be a bounded domain in $\R^{2n}$. A $J$-holomorphic disc from $\D$ to $\Omega$ is called \emph{extremal} if for every $J$-holomorphic disc $g:\D\to \Omega$ such that $g(0)=f(0)$ and $g'(0)=\lambda f'(0)$ for some $\lambda\geq 0$, it follows that $\lambda \leq 1$. We denote here by $f'$ the velocity vector field $f_x$ which, in the usual holomorphic case, agrees with the standard complex derivative. When $J\equiv J_{st}$ and $\Omega$ is strongly convex these discs were discussed in the celebrated paper of Lempert \cite{LEMPERT}. He proved that they are uniquely determined by the pair $(f(0),f'(0))$ and can be extended to a proper smooth embedding attached to the boundary $\partial\Omega$. However, the theory becomes less clear when dealing with pseudoconvex domains. For instance, Sibony's examples in \cite{PANG} show that extremal discs are not unique and that the relation between $f(\partial\D)$ and $\partial\Omega$ is less trivial. Nevertheless, using a variational approach, Poletsky showed that $\mathcal{C}^1$-closed extremal discs are proper when $\Omega$ is bounded by a $\mathcal{C}^2$-plurisubharmonic function \cite{POLETSKY} (see also \cite[Theorem 1]{BDSJoo}).

We provide a generalization of this last statement for the case of non-integrable structures. That is, we improve \cite[Proposition 4.1]{gau-joo}.

\begin{theorem}\label{main-thm}
Assume that $\det(J+J_{st})\neq 0$. Let $\Omega=\left\{\rho<0\right\}\subset \R^{2n}$ be a bounded domain defined by a $\mathcal{C}^2$-smooth $J$-plurisubharmonic function. Let $f\in \mathcal{C}^{2,\alpha}(\overline{\mathbb{D}})$ be an extremal $J$-holomorphic disc. Then $f(\partial\D) \subset \partial \Omega$.
\end{theorem}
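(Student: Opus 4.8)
The plan is to argue by contradiction, exploiting subharmonicity and then enlarging $f$ by an infinitesimal deformation that is realized through Theorem~1. Suppose $f(\partial\D)\not\subset\partial\Omega$, so there is $\zeta_0\in\partial\D$ with $f(\zeta_0)\in\Omega$. Set $u:=\rho\circ f$. Since $\rho$ is $J$-plurisubharmonic and $f$ is $J$-holomorphic, $u$ is subharmonic on $\D$, and $u\le 0$ on $\overline{\D}$ because $f(\overline{\D})\subset\overline{\Omega}$; moreover $u(\zeta_0)<0$, so the contact set $E:=\{\zeta\in\partial\D:u(\zeta)=0\}$ is a proper closed subset of the circle. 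On $f(E)\subset\partial\Omega$ I use that $\rho$ is a defining function, so $\du\rho\neq 0$ there. The basic reduction is the maximum principle: a deformed disc $f_t$ lands in $\Omega$ as soon as $u_t:=\rho\circ f_t\le 0$ on $\partial\D$, since subharmonicity of $u_t$ then forces $u_t\le\max_{\partial\D}u_t$ on all of $\overline{\D}$.

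First I would produce, via the refined form of Theorem~1, a $J$-holomorphic variational vector field $V\in\mathcal C^{1,\alpha}(\overline{\D})$ along $f$ with three properties: (i) it fixes the centre, $V(0)=0$; (ii) the $f'(0)$-component of $V'(0)$ is strictly positive, so the derivative grows in the right direction; and (iii) it points strictly inward along the contact set, $\mathrm{Re}\langle\nabla\rho(f(\zeta)),V(\zeta)\rangle<0$ for $\zeta\in E$. Feeding $V$ into Theorem~1 yields a family $(f_t)$ with $f_0=f$ and $\frac{\du}{\du t}_{|t=0}f_t=V$. For small $t>0$, condition (iii) extends by continuity to a neighbourhood $U$ of $E$ where $\mathrm{Re}\langle\nabla\rho(f),V\rangle\le-c<0$, so that $u_t=u+t\,\mathrm{Re}\langle\nabla\rho(f),V\rangle+O(t^2)\le -ct+O(t^2)<0$ on $U$ (the remainder is uniform because $\rho\in\mathcal C^2$ and $\|f_t-f\|\le Ct$), while on the compact complement $u\le-\delta<0$ keeps $u_t<0$. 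Hence $u_t<0$ on $\partial\D$ and, by the reduction above, $f_t(\overline{\D})\subset\Omega$ compactly. Because the containment is now strict, a small correction of the one-jet at $0$ (using the freedom in the refined Theorem~1 together with the automorphisms of $\D$) makes $f_t(0)=f(0)$ and $f_t'(0)=\lambda_t f'(0)$ hold exactly while preserving $f_t(\overline{\D})\subset\Omega$; by (ii) one gets $\lambda_t>1$, contradicting the extremality of $f$.

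The heart of the matter is the construction of $V$, which I would phrase as a boundary value problem for the linear equation (\ref{VF}). Under the hypothesis $\det(J+J_{st})\neq 0$, equation (\ref{VF}) becomes, after a complex change of frame, a generalized Cauchy--Riemann (Vekua) system, so the linear theory of \cite{ST} applies: solutions are governed by a condition into a totally real bundle along $\partial\D$ together with an index fixed by the winding of $f'$. The slack arc $\partial\D\setminus E$, where $u<0$, is exactly what supplies the freedom to impose the strictly inward values (iii) on $E$ while still prescribing the interior normalisation (i)--(ii); reconciling these boundary and interior data is precisely what the refined version of Theorem~1 must guarantee.

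The hard part will be this simultaneous solvability. The inward requirement (iii) is a one-sided constraint imposed only on the subset $E$, so the relevant problem is a Riemann--Hilbert problem with an inequality condition rather than an equality one, and one must check that its index leaves room to enforce the interior one-jet without forcing $V$ to violate (iii) on $E$. A further delicacy is that $E$ is merely a closed set and may be topologically complicated, so extending the strict sign in (iii) to a neighbourhood and controlling the deformation there—rather than the $\mathcal C^2$ Taylor estimate, which is benign since $\dot u<0$ is of order $t$—is where the real technical work lies; this is the step for which I would invoke the refined statement announced in \S3.
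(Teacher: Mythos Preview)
Your overall strategy---argue by contradiction and feed a suitable variational field $V$ into Theorem~1---is the paper's as well. But the construction of $V$ is a genuine gap, and the paper fills it with a device you do not mention.

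The paper never solves a Riemann--Hilbert problem for $V$. Its key observation is Lemma~\ref{varia-lem}: for \emph{any} scalar holomorphic function $\phi=a+ib$ on $\D$, the field $\phi\cdot f':=af'+bJ(f)f'$ is automatically a $J$-holomorphic variational field along $f$. This collapses your vector-valued boundary problem with inequality constraints to the construction of a single scalar holomorphic function, which is elementary harmonic analysis. Taking $\phi(\zeta)=\zeta\exp(\phi_R(\zeta))$ with $\mathrm{Re}\,\phi_R|_{\partial\D}$ supported in the slack arc and equal to a large constant $R$ there gives $V(0)=0$ and $V'(0)=\exp(\phi_R(0))\,f'(0)$, an \emph{exact} positive multiple of $f'(0)$. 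Your difficulty with~(ii)---that $f_t'(0)$ need not be parallel to $f'(0)$, which you propose to repair by an unspecified ``correction of the one-jet'' via disc automorphisms---simply does not arise. Note that automorphisms of $\D$ fixing $0$ act on $f_t'(0)$ only by $J$-rotation in the $J$-complex line it spans, so they cannot force $f_t'(0)$ back onto the real ray through $f'(0)$ if $V'(0)$ points off that line; your fix as stated does not work.

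The paper also avoids the contact set $E$ entirely: rather than requiring $V$ to point inward on $E$, it shrinks to $f_r(\zeta)=f(r\zeta)$ for $r<1$, so that $\rho\circ f_r\le -C_2(1-r)<0$ on all of $\overline{\D}$ by subharmonicity. The deficit $r<1$ in the derivative is then beaten by choosing the scalar parameter $R$ large enough that $r(1+t\exp(\phi_R(0)))>1$ for admissible $t$. This sidesteps both your inequality-constrained boundary value problem (which is nonstandard and for which you give no solvability argument) and your implicit assumption $\du\rho\neq 0$ on $\partial\Omega$, which the theorem does not make.
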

\noindent When an extremal disc is proper, one can discuss its relation to the so-called stationary discs that were also introduced by Lempert in \cite{LEMPERT}. In the standard case, this refers to biholomorphic invariants that can be defined in two equivalent ways: as discs that admit a certain proper meromorhic lift to the cotangent bundle \cite{TUMANOV}; as stationarity solutions of the Euler-Lagrange equation \cite{POLETSKY,PANG,BDSJoo}. As explained in \cite{gau-joo}, in the non-integrable case we fancy the second approach, since the first one does not provide us with the necessarry conditions for a disc to be extremal. However, such theory of $J$-stationary discs was developped only for small perturbations of $J_{st}$. Using our Theorem 1 this can now be improved. We discuss this briefly at the end of the paper.

\section{The Implicit function theorem}
Let $f\colon \D\to \R^{2n}$ be a $J$-holomorphic disc. If $\det(J+J_{st})\neq 0$ along the image $f(\mathbb{D})$, the $J$-holomorphicity condition (\ref{JvR}) can be turned into a non-linear Cauchy-Riemann system
\begin{eqnarray}\label{A}f_{\bar{\zeta}}+A(f)\overline{f_{\zeta}}=0,\end{eqnarray}
where $A(z)(v)=(J(z)+J_{st})^{-1}(J(z)-J_{st})(\bar{v})$ is complex linear in $v \in \C^n$ for every $z\in \C^{n}$ and can be treated as a complex matrix function; see \cite{ST3}. In this notation, a variational $J$-holomorphic vector field corresponds to a solution of the following complex equation
\begin{equation}\label{VFA}
V_{\bar{\zeta}}+A(f)\overline{V_{\zeta}}+\textrm{d}_fA(V)\overline{f_{\zeta}}=0.
\end{equation}
Moreover, this equation can be rewritten into a linear condition
\begin{equation}\label{VFB}
V_{\bar{\zeta}}+A(f)\overline{V_{\zeta}}+B^f_1V+B^f_2\overline{V}=0
\end{equation} 
determined by smooth matrix functions $B^f_1$ and $B^f_2$, depending on $A$ and $f$.

Note that in the Theorem 1 we can assume that $f$ is an embedding. Otherwise we consider its graph $\tilde{f}$ that is a $J_{st}\otimes J$-holomorphic disc in the almost complex space $(\R^{2n+2},J_{st}\otimes J)$ and interpolate the variational vector field $\tilde{V}(\zeta):=(\zeta,V(\zeta)).$ Therefore, after a change of coordinates, we can assume that $A(f)=0$. Indeed, see e.g.\ the Appendix to \cite{IR}. This means that the linear condition (\ref{VFB}) can be associated with the theory of so-called generalized vectors (see e.g. \cite{BOJARSKI,ST}).

Let us turn now towards the proof of Theorem 1. Suppose that $J\equiv J_{st}$. Then $A(f)\equiv 0$ and, instead of (\ref{A}) and (\ref{VFB}), we have the usual holomorphicity condition for $f$ and $V$. Thus the required family of discs is
$$f_t=f+tV.$$ 
We will mimic this idea using the non-linear techniques for Banach spaces. That is, we rely on the following standard theorem (see e.g. \cite[p.298]{LANG}).

\begin{theorem}\label{implicit}
Let $X$ and $Y$ be Banach spaces and let $\mathcal{F}\colon \mathbb{B}_{X}(x_0,\epsilon)\to Y$ be a $\mathcal{C}^1$ map defined on some ball in $X$. Suppose there is $C>0$ such that: 

\textit{i)} Given $x\in\mathbb{B}_{X}(x_0,\epsilon)$ and $v\in Y$ the operator $d_{x}\mathcal{F}\colon X\to Y$ is surjective 

and the equation $d_{x}\mathcal{F}(u)=v$ admits a solution with $\left\|u\right\|_X\leq C\left\|v\right\|_Y$.

\textit{ii)} For any $x_1,x_2\in \mathbb{B}_{X}(x_0,\epsilon)$ we have $\left\|d_{x_1}\mathcal{F}-d_{x_2}\mathcal{F}\right\|\leq \frac{1}{2C}$.

\noindent Then the ball $\mathbb{B}_Y\left(\mathcal{F}(x_0),\frac{\epsilon}{2C}\right)$ lies in  $\mathcal{F}\left(\mathbb{B}_{X}(x_0,\epsilon)\right).$
\end{theorem}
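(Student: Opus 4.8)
The plan is to prove the stated inclusion by a quantitative (modified Newton) iteration driven by the approximate right inverse supplied by hypothesis \textit{i)}. Fix a target $y\in\mathbb{B}_Y\bigl(\mathcal{F}(x_0),\tfrac{\epsilon}{2C}\bigr)$; I must produce $x\in\mathbb{B}_X(x_0,\epsilon)$ with $\mathcal{F}(x)=y$. Starting from $x_0$, I would define a sequence $(x_n)$ recursively: given $x_n\in\mathbb{B}_X(x_0,\epsilon)$, apply \textit{i)} with $v=y-\mathcal{F}(x_n)$ to choose $u_n\in X$ solving $d_{x_n}\mathcal{F}(u_n)=y-\mathcal{F}(x_n)$ together with the control $\|u_n\|_X\le C\,\|y-\mathcal{F}(x_n)\|_Y$, and set $x_{n+1}=x_n+u_n$. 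Writing $r_n=y-\mathcal{F}(x_n)$ for the residual, the whole argument rests on showing the contraction $\|r_{n+1}\|_Y\le\tfrac12\|r_n\|_Y$.

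To obtain this decay of the residuals I would use that $\mathcal{F}$ is $\mathcal{C}^1$, so
\[
\mathcal{F}(x_{n+1})-\mathcal{F}(x_n)=\int_0^1 d_{x_n+su_n}\mathcal{F}(u_n)\,\mathrm{d}s
= d_{x_n}\mathcal{F}(u_n)+\int_0^1\bigl(d_{x_n+su_n}\mathcal{F}-d_{x_n}\mathcal{F}\bigr)(u_n)\,\mathrm{d}s.
\]
Since $d_{x_n}\mathcal{F}(u_n)=r_n$, this gives $r_{n+1}=-\int_0^1\bigl(d_{x_n+su_n}\mathcal{F}-d_{x_n}\mathcal{F}\bigr)(u_n)\,\mathrm{d}s$, and hypothesis \textit{ii)} together with the control on $u_n$ yields
\[
\|r_{n+1}\|_Y\le \sup_{0\le s\le 1}\bigl\|d_{x_n+su_n}\mathcal{F}-d_{x_n}\mathcal{F}\bigr\|\,\|u_n\|_X
\le \tfrac{1}{2C}\cdot C\,\|r_n\|_Y=\tfrac12\|r_n\|_Y .
\]
Consequently $\|r_n\|_Y\le 2^{-n}\|r_0\|_Y$ and $\|u_n\|_X\le C\,2^{-n}\|r_0\|_Y$, so $\sum_n\|u_n\|_X\le 2C\|r_0\|_Y<\epsilon$; hence $(x_n)$ is Cauchy and converges to some $x_*$ with $\|x_*-x_0\|_X\le 2C\|r_0\|_Y<\epsilon$.

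The delicate point, and the step I expect to require the most care, is to verify that every iterate and every intermediate point $x_n+su_n$, $s\in[0,1]$, actually lies in $\mathbb{B}_X(x_0,\epsilon)$, since only there are \textit{i)} and \textit{ii)} available. This I would close by induction: assuming $x_0,\dots,x_n\in\mathbb{B}_X(x_0,\epsilon)$ with the geometric bounds above, one has $\|x_n+su_n-x_0\|_X\le\sum_{k=0}^{n}\|u_k\|_X\le 2C\|r_0\|_Y<\epsilon$, which keeps the whole segment inside the open ball and licenses the use of \textit{ii)} at stage $n$; this is precisely where the admissible radius $\tfrac{\epsilon}{2C}$ for $y$ enters, via the strict inequality $2C\|r_0\|_Y<\epsilon$. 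Finally, continuity of $\mathcal{F}$ gives $\mathcal{F}(x_*)=\lim_n\mathcal{F}(x_n)=\lim_n(y-r_n)=y$, so $y\in\mathcal{F}\bigl(\mathbb{B}_X(x_0,\epsilon)\bigr)$, which proves the asserted inclusion.
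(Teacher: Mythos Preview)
The paper does not actually prove this theorem; it is quoted as a standard Banach-space result with a reference to \cite{LANG}. Your Newton-type iteration is precisely the classical argument one finds in such references, and it is correctly carried out: the contraction $\|r_{n+1}\|\le\tfrac12\|r_n\|$ follows from the integral remainder and \textit{ii)}, the geometric series $\sum\|u_k\|\le 2C\|r_0\|<\epsilon$ keeps every iterate and every segment $x_n+su_n$ inside the open ball (so \textit{i)} and \textit{ii)} remain applicable throughout), and continuity of $\mathcal{F}$ closes the argument.
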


Let us fix $0<\alpha<1$. The classical Cauchy-Green operator given by 
$$
T(f)(z)=-\frac{1}{\pi}\iint_{\D}\frac{f(\xi)}{\xi-z}\, \du x \, \du y (\xi)
$$
is continuous when mapping between H\"older spaces of vector functions $T\colon \mathcal{C}^{0,\alpha}(\overline{\D})\to \mathcal{C}^{1,\alpha}(\overline{\D})$. Thus, we can define a continuous operator 
\begin{equation}\label{OPF}\mathcal{F}(f)=f+T\left(A(f)\overline{f_\zeta}\right)\end{equation}
mapping the space $\mathcal{C}^{1,\alpha}(\overline{\mathbb{D}})$ to itself.
Moreover, $T$ solves the usual $\bar{\partial}$-equation since $(Tu)_{\bar{z}}=u$. Hence, given a $J$-holomorphic disc $f$ the map $\mathcal{F}(f)$ is a $J_{st}$-holomorphic vector function.  

As explained above, without loss of generality, the derivative of $\mathcal{F}$ at $f$ can be assumed to be equal to
$$
d_f\mathcal{F}(V)=V+T\left(B_1V+B_2\overline{V}\right).
$$
This singular integral operator is known to be Fredholm with a possibly non trivial cokernel \cite{ST}. Therefore, we have to linearly perturb $\mathcal{F}$ in order to meet requirements of Theorem \ref{implicit}. 

\begin{lemma}\label{lemma4}
There exists a locally invertible perturbation $\tilde{\mathcal{F}}$ of the operator given in (\ref{OPF}) that maps $J$-holomorphic disc to the usual holomorphic ones. In particular, there is $C>0$ such that for every $W\in \mathcal{C}^{1,\alpha}(\overline{\mathbb{D}})$ there exists $V\in \mathcal{C}^{1,\alpha}(\overline{\mathbb{D}})$ satisfying $d_f\tilde{\mathcal{F}}(V)=W$ and $\left\|V\right\|_{1,\alpha}\leq C \left\|W\right\|_{1,\alpha}.$ 
\end{lemma}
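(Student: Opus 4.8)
The plan is to show that the linearization $L:=d_f\mathcal{F}$ is Fredholm of index zero and then to remove its finite-dimensional cokernel by a \emph{finite-rank linear} correction whose range consists of genuinely holomorphic functions, so that the perturbed operator still carries $J$-holomorphic discs to holomorphic ones. After the normalization $A(f)=0$ explained above we have $d_f\mathcal{F}(V)=V+T(B_1V+B_2\overline{V})$. The map $V\mapsto B_1V+B_2\overline{V}$ is bounded on $\mathcal{C}^{1,\alpha}(\overline{\D})$, the embedding $\mathcal{C}^{1,\alpha}(\overline{\D})\hookrightarrow\mathcal{C}^{0,\alpha}(\overline{\D})$ is compact, and $T\colon\mathcal{C}^{0,\alpha}(\overline{\D})\to\mathcal{C}^{1,\alpha}(\overline{\D})$ is bounded; hence $K(V):=T(B_1V+B_2\overline{V})$ is compact and $L=\mathrm{Id}+K$ is Fredholm of index $0$, with $p:=\dim\ker L=\dim\mathrm{coker}\,L<\infty$. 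Differentiating $(Tu)_{\bar\zeta}=u$ gives the identity $(LV)_{\bar\zeta}=V_{\bar\zeta}+B_1V+B_2\overline{V}$, so $LV$ is holomorphic exactly when $V$ solves (\ref{VFB}), i.e.\ is a variational field.

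The decisive point is that $\mathrm{coker}\,L$ can be represented by holomorphic functions. Let $H\subset\mathcal{C}^{1,\alpha}(\overline{\D})$ denote the holomorphic maps and put $\beta(V):=V_{\bar\zeta}+B_1V+B_2\overline{V}$, so that $\beta=\partial_{\bar\zeta}\circ L$ by the identity above. The operator $\partial_{\bar\zeta}\colon\mathcal{C}^{1,\alpha}(\overline{\D})\to\mathcal{C}^{0,\alpha}(\overline{\D})$ is onto (it is right-inverted by $T$) with kernel $H$. Thus, once the inhomogeneous generalized Cauchy--Riemann operator $\beta$ is known to be onto $\mathcal{C}^{0,\alpha}(\overline{\D})$, it follows that $\mathrm{im}\,L+H=\mathcal{C}^{1,\alpha}(\overline{\D})$: given $w$, choose $u\in\mathrm{im}\,L$ with $u_{\bar\zeta}=w_{\bar\zeta}$, so that $w-u\in H$. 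The unconditional solvability of $\beta(V)=g$ is exactly the content of the linear Vekua--Bojarski theory for generalized analytic vectors \cite{BOJARSKI,ST}. Consequently the natural map $H\to\mathrm{coker}\,L$ is surjective and I may fix holomorphic $g_1,\dots,g_p\in\mathcal{C}^{1,\alpha}(\overline{\D})$ whose classes form a basis of $\mathrm{coker}\,L$. I expect this to be the main obstacle: guaranteeing that the finite-dimensional cokernel is not invisible to holomorphic test functions --- equivalently, the unconditional solvability of the inhomogeneous version of (\ref{VFB}) --- whereas the Fredholm bookkeeping around it is soft.

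With the $g_i$ at hand, choose a basis $V_1,\dots,V_p$ of $\ker L$ and, by Hahn--Banach, functionals $\lambda_i\in(\mathcal{C}^{1,\alpha}(\overline{\D}))^{*}$ with $\lambda_i(V_j)=\delta_{ij}$; then set $R(V)=\sum_{i=1}^{p}\lambda_i(V)\,g_i$ and $\tilde{\mathcal{F}}(f)=\mathcal{F}(f)+R(f)$. Since $R$ is linear with range in $H$ while $\mathcal{F}(f)$ is holomorphic whenever $f$ is $J$-holomorphic, the map $\tilde{\mathcal{F}}$ sends $J$-holomorphic discs to holomorphic ones; conversely $\tilde{\mathcal{F}}(g)$ holomorphic forces $g_{\bar\zeta}+A(g)\overline{g_\zeta}=0$, so the two classes match. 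Its differential $d_f\tilde{\mathcal{F}}=L+R$ is once more $\mathrm{Id}+(\text{compact})$, hence Fredholm of index $0$, and it is injective: $(L+R)V=0$ gives $LV=-\sum_i\lambda_i(V)g_i$, an element of $\mathrm{im}\,L\cap\mathrm{span}(g_1,\dots,g_p)=\{0\}$ because the classes of the $g_i$ are independent modulo $\mathrm{im}\,L$; hence $V\in\ker L$ with all $\lambda_i(V)=0$, i.e.\ $V=0$. An injective Fredholm operator of index $0$ is an isomorphism, so $d_f\tilde{\mathcal{F}}$ is invertible; its bounded inverse furnishes the constant $C$ with $\|V\|_{1,\alpha}\le C\,\|W\|_{1,\alpha}$, and as $d_f\tilde{\mathcal{F}}$ remains invertible near $f$ the inverse function theorem in Banach spaces makes $\tilde{\mathcal{F}}$ the desired locally invertible perturbation.
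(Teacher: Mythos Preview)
Your argument is correct and follows the same architecture as the paper's proof: show that $L=d_f\mathcal{F}$ is Fredholm of index zero, show that holomorphic functions cover its cokernel, and then add a finite-rank holomorphic correction paired against a basis of $\ker L$. The one substantive difference lies in the step you correctly flag as the main obstacle, namely $H+\mathrm{Range}(L)=\mathcal{C}^{1,\alpha}(\overline{\D})$. The paper proves this by a duality argument: it introduces the real $L^2$ inner product, computes the adjoint $L^*$, observes that for $V\in\ker L^*$ the function $W=\overline{T(\bar V)}$ is a generalized analytic vector, and then invokes a unique continuation result (\cite[Corollary 3.4]{ST}) to conclude that if in addition $V\perp H$ then $W$ vanishes outside $\overline{\D}$ and hence identically, forcing $V=0$. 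You instead factor $\beta=\partial_{\bar\zeta}\circ L$ and reduce the same claim to surjectivity of the inhomogeneous generalized Cauchy--Riemann operator $\beta$, taken as a black box from Vekua--Bojarski theory. Both routes are valid; the paper's is more explicit about exactly which result from \cite{ST} is being used, while yours is cleaner structurally but leans on a broader citation. The remaining difference---the paper uses the concrete functionals $g\mapsto\mathrm{Re}\langle g,V_j\rangle$ where you use abstract Hahn--Banach functionals $\lambda_i$---is cosmetic; your choice makes the injectivity check for $L+R$ marginally more transparent.
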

\begin{proof}
This statement is proved in \cite{ST}. For completeness, we include some details. We introduce the real inner product of vector functions:
$$\left\langle f,g\right\rangle=\sum_{j=1}^{n}\iint_{\mathbb{D}} f_j\bar{g}_j \mathrm{d}x\mathrm{d}y.$$     
Let $H\subset \mathcal{C}^{k,\alpha}(\overline{\D})$ be the set of (usual) holomorphic vector functions. Then 
$$H+\mathrm{Range}(d_f\mathcal{F})=\mathcal{C}^{1,\alpha}(\overline{\D}).$$ 
Indeed, let $V\in\mathrm{Range}(d_f\mathcal{F})^{\bot}=\ker d_f\mathcal{F}^*$. The adjoint map of $d_f\mathcal{F}$ equals 
$$d_f\mathcal{F}^{*}(V)=V-\overline{B_1^T T(\bar{V})}-B_2^{T}T(\bar{V}).$$
Hence the vector $W=\overline{T(\overline{V})}$ satisfies the equation 
$$W_{\bar{\zeta}}-\overline{B_1^T}W-B_2^{T}\overline{W}=0$$
and is a so-called generalized analytic vector. Moreover, if $V$ is also orthogonal to the space $H$, then $W$ vanishes on 
$\C\backslash\overline{\D}$. This implies that $W\equiv 0$ and thus $V\equiv 0$ (see \cite[Corollary 3.4]{ST}). 

Let $N=\dim\ker d_f\mathcal{F}$. There exist $h_1, 
h_2,\ldots,h_N\in H$ such that 
\begin{eqnarray*}\label{vsota}\mathrm{Span}_{\R}(h_1,h_2,\ldots,h_N)\oplus 
\mathrm{Range}(d_f\mathcal{F})=\mathcal{C}^{1,\alpha}(\overline{\D}).\end{eqnarray*}
Thus, if $V_1,\ldots V_n$ form a basis of $\ker d_f\mathcal{F}$, the operator given by 
\begin{equation}\label{OPFT}\tilde{\mathcal{F}}(f)=\mathcal{F}(f)+\sum_{j=1}^{N}\mathrm{Re}(f,V_j)h_j\end{equation}
still maps $J$-holomorphic discs into $J_{st}$-holomorphic ones and admits an invertible linearization. Hence for $C$ one can set the norm of $d_f\tilde{\mathcal{F}}^{-1}.$
\end{proof}

Let us prove that, close to a fixed disc $f$, the operator $\tilde{\mathcal{F}}$ satisfies $ii)$ from Theorem \ref{implicit}. The size of the neighborhood depends on the $(1,\alpha)$-norm of $f$.
\begin{lemma}\label{lemma5}
Given a $J$-holomorphic disc $f$ with $A(f)\equiv 0$ and $\left\|f\right\|_{1,\alpha}<C$ there is $\epsilon_0>0$ such that  $\left\|g-f\right\|_{1,\alpha}<\epsilon_0$ implies 
$\left\|d_{g}\tilde{\mathcal{F}}-d_{f}\tilde{\mathcal{F}}\right\|<\frac{1}{4C}.$ 
\end{lemma}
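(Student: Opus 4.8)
The plan is to prove Lemma~\ref{lemma5} by showing that the map $g\mapsto d_g\tilde{\mathcal{F}}$ is continuous (in fact locally Lipschitz) as a map from a ball in $\mathcal{C}^{1,\alpha}(\overline{\D})$ into the Banach space of bounded operators on $\mathcal{C}^{1,\alpha}(\overline{\D})$, and then invoke this continuity to extract the $\epsilon_0$. Since $\tilde{\mathcal{F}}$ differs from $\mathcal{F}$ only by the finite-rank term $\sum_{j}\mathrm{Re}(f,V_j)h_j$, whose linearization $V\mapsto\sum_j\mathrm{Re}(V,V_j)h_j$ is independent of the base point $f$, it suffices to estimate $\|d_g\mathcal{F}-d_f\mathcal{F}\|$. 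Writing out $d_g\mathcal{F}(V)=V+T(A(g)\overline{V_\zeta}+\mathrm{d}_gA(V)\overline{g_\zeta})$, the difference reduces to controlling three pieces: the term $T(A(g)\overline{V_\zeta})$, which vanishes at $g=f$ because $A(f)\equiv 0$; and the two $B$-type zeroth-order terms built from $\mathrm{d}_gA(V)\overline{g_\zeta}$.

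First I would fix the continuity of the Cauchy--Green operator $T\colon\mathcal{C}^{0,\alpha}(\overline{\D})\to\mathcal{C}^{1,\alpha}(\overline{\D})$ as already recorded, so that it suffices to bound the inner arguments in the $\mathcal{C}^{0,\alpha}$-norm. Next I would expand each coefficient in $g$ around $f$. Because $A$ and its derivatives $\mathrm{d}A$, $\mathrm{d}^2A$ are smooth (indeed $J\in\mathcal{C}^\infty$, so $A\in\mathcal{C}^\infty$ wherever $\det(J+J_{st})\neq 0$), composition with a $\mathcal{C}^{1,\alpha}$ disc is locally Lipschitz: for $\|g-f\|_{1,\alpha}$ small we get $\|A(g)-A(f)\|_{0,\alpha}=\|A(g)\|_{0,\alpha}\leq L\,\|g-f\|_{1,\alpha}$ and analogous bounds for $\mathrm{d}_gA-\mathrm{d}_fA$, where $L$ depends only on a $\mathcal{C}^2$-bound for $A$ on a neighborhood of $f(\overline{\D})$ and on $\|f\|_{1,\alpha}<C$. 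The factor $\overline{g_\zeta}$ is likewise close to $\overline{f_\zeta}$ in $\mathcal{C}^{0,\alpha}$. Using that $\mathcal{C}^{0,\alpha}(\overline{\D})$ is a Banach algebra so that products are controlled, I would collect these into a single estimate of the form
\begin{equation*}
\|d_g\mathcal{F}-d_f\mathcal{F}\|\leq K(C)\,\|g-f\|_{1,\alpha},
\end{equation*}
with $K(C)$ depending only on $\|T\|$, the algebra constant, and the stated bounds on $A$ and $f$.

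Given this Lipschitz bound, the lemma follows immediately: choose $\epsilon_0=\min\{\epsilon_1,\tfrac{1}{4CK(C)}\}$, where $\epsilon_1$ is small enough that $g(\overline{\D})$ stays in the neighborhood of $f(\overline{\D})$ on which $A$ is smooth with the assumed $\mathcal{C}^2$-bounds. Then $\|g-f\|_{1,\alpha}<\epsilon_0$ yields $\|d_g\tilde{\mathcal{F}}-d_f\tilde{\mathcal{F}}\|=\|d_g\mathcal{F}-d_f\mathcal{F}\|\leq K(C)\epsilon_0<\tfrac{1}{4C}$, as required.

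The step I expect to be the main obstacle is the bookkeeping in the second paragraph: making precise that substituting $g$ for $f$ in the smooth coefficient functions $A(\cdot)$, $\mathrm{d}_\cdot A$ produces a genuinely Lipschitz dependence in the operator norm, with a constant that is uniform over the ball $\|f\|_{1,\alpha}<C$. This requires care because the linearization $\mathrm{d}_gA(V)$ is itself the directional derivative of a matrix-valued composition, so I must differentiate once more and control $\mathrm{d}^2A$; the resulting estimate hinges on the Banach-algebra property of $\mathcal{C}^{0,\alpha}$ and on keeping the image $g(\overline{\D})$ inside the region where $\det(J+J_{st})\neq 0$ guarantees smoothness of $A$. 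The remaining arguments are routine applications of the continuity of $T$ and the finite-rank structure of the perturbation.
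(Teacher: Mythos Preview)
Your proposal is correct and follows essentially the same route as the paper: split $d_g\tilde{\mathcal{F}}-d_f\tilde{\mathcal{F}}$ into the piece $T(A(g)\overline{V_\zeta})$ (vanishing at $g=f$ since $A(f)\equiv 0$) and the zeroth-order $B$-coefficient pieces, then use the boundedness of $T\colon\mathcal{C}^{0,\alpha}\to\mathcal{C}^{1,\alpha}$ together with the smoothness of $A$ and the Banach-algebra property of $\mathcal{C}^{0,\alpha}$ to obtain a Lipschitz estimate in $\|g-f\|_{1,\alpha}$, from which $\epsilon_0$ is read off. Your remark that the finite-rank correction has base-point-independent linearization is in fact slightly cleaner than the paper, which records a separate term $I_3=\sum_j\mathrm{Re}(f-g,V_j)h_j$ for it; either way this contribution is trivially controlled.
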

\begin{proof}
Since, in general $A(g)\neq 0$, we have 
$$d_{g}\tilde{\mathcal{F}}(V)-d_{f}\tilde{\mathcal{F}}(V)=I_1+I_2+I_3,$$ 
where 
$$
\left\{\begin{array}{lll} 
I_1&= & T\left(A(g)\overline{V_{\zeta}}\right),\\
I_2&=&T\left(B_1^g-B_1^{f})V+(B_2^g-B_2^f)\bar{V}\right).\\
I_3&=& \sum_{j=1}^{N}\mathrm{Re}(f-g,V_j)h_j
\end{array}
\right.$$                                                                                                                                 
Let us assume that $\left\|g-f\right\|_{1,\alpha}<1$. Then there exist $C_k>0$, $k\in\left\{1,2,3\right\},$ such that 
$$\left\|I_k\right\|_{0,\alpha}\leq C_k \left\|g-f\right\|_{1,\alpha}\left\|V\right\|_{1,\alpha}.$$
Indeed, the bound for $I_3$ is obvious. Furthermore, $T\colon\mathcal{C}^{0,\alpha}(\overline{\mathbb{D}})\to \mathcal{C}^{1,\alpha}(\overline{\mathbb{D}})$ is bounded, hence we only have to bound the $(0,\alpha)$-norm of its arguments in $I_1$ and $I_2$. In $I_1$ the required $\alpha$-bound depends on the coefficients of $A$ and $\left\|g\right\|_{1,\alpha}<1+C$. However, in $I_2$ we have  
$$B^f_1V+B^f_2\bar{V}= \left(\sum_{j=1}^n\frac{\partial A}{\partial z_j}(f)V_j+\frac{\partial A}{\partial \bar{z}_j}(f)\bar{V}_j\right)\overline{f_\zeta}.$$
Thus the required $\alpha$-bound depends on values and derivatives of $A$, $f$ and $g$. 
Finally, we set $\epsilon_0=\frac{1}{4C}\min\left\{1,C_1+C_2+C_3\right\}.$
\end{proof}
\begin{proof}[Proof of Theorem 1]
We apply the Theorem 3 for $\tilde{\mathcal{F}}$. Let us set
$$C=\max\{\left\|f\right\|_{1,\alpha},2\cdot\left\|d_f\tilde{\mathcal{F}}^{-1}\right\|\}$$ and let $\epsilon_0>0$ be such as in Lemma \ref{lemma5}. The set of invertible operators is open. Thus there is $\epsilon_1>0$ such that for $\left\|f-g\right\|_{1,\alpha}<\epsilon_1$ the inverse $d_g\tilde{\mathcal{F}}^{-1}$ exists and its norm is bounded by $C$. For $\epsilon=\min\{\epsilon_0,\epsilon_1\}$ the operator $\tilde{\mathcal{F}}$ satisfies $i)$ and $ii)$ in Theorem 3, hence there is $t_0>0$ depending on $C,$ $\epsilon$ and $V$ such that given a $J$-holomorphic variational vector $V$ field along $f$ the family 
$$f_t=\tilde{\mathcal{F}}^{-1}\left(\tilde{\mathcal{F}}(f)+td_f\tilde{\mathcal{F}}(V)\right)$$
is well defined and $J$-holomorphic for $|t|<t_0$. Moreover, $f_0=f$. 
\end{proof}
\noindent It is worth noting that the size of the neighborhood on which the Implicit function theorem can be applied actually depends on the $(1,\alpha)$-norm of $f$ and the operator norm of $d_f\tilde{\mathcal{F}}^{-1}$. Indeed, this two norms provide the appropriate constants $C$ and $\epsilon_0$ in lemmas \ref{lemma4} and \ref{lemma5}. Therefore, we have to uniformly bound them if we want to apply Theorem 1 for more than one $J$-holomorphic disc. We will do that in the next section when dealing with discs $f_r(\zeta)=f(r\zeta),\; r\in[\frac{1}{2},1]$. However, this case is trivial since one can rescale the integral operator $T$ an rely on the fact that $[\frac{1}{2},1]$ is compact.
  
\section{Proof of Theorem 2}
We start by proving the following lemma and a corolarry of Theorem 1.
\begin{lemma}\label{varia-lem}
Let $\phi=a+ib\colon \mathbb{D}\to \mathbb{C}$ be a standard holomorphic map. If $f\in\mathcal{C}^{2,\alpha}(\overline{\mathbb{D}})$ is a $J$-holomorphic disc, then 
$$V=\phi\cdot f':=af'+bJ(f)f'\in \mathcal{C}^{1,\alpha}(\overline{\D})$$
is a $J$-holomorphic variational vector field along $f$.
\end{lemma}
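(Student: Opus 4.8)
The plan is to verify directly that the explicitly given field $V$ solves the defining equation (\ref{VF}). The conceptual reason behind the statement is that $V$ is the infinitesimal generator of a holomorphic reparametrization of $f$: if $\chi_t$ is a family of holomorphic self-maps with $\chi_0=\mathrm{id}$ and $\frac{d}{dt}|_{t=0}\chi_t=\phi$, then each $f\circ\chi_t$ is again $J$-holomorphic, and the chain rule gives $\frac{d}{dt}|_{t=0}(f\circ\chi_t)=df(\phi)=a f_x+b f_y$, which equals $V$ once one uses $J$-holomorphicity. Since $\zeta+t\phi(\zeta)$ may leave $\overline{\mathbb{D}}$ near the boundary, making $f\circ\chi_t$ ill-defined there, I would not rely on this family directly but keep it only as motivation and instead establish the pointwise identity (\ref{VF}).

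First, the regularity claim is immediate: since $f\in\mathcal{C}^{2,\alpha}(\overline{\mathbb{D}})$ we have $f'=f_x\in\mathcal{C}^{1,\alpha}$, the matrix function $J(f)$ is at least $\mathcal{C}^{1,\alpha}$, and $\phi$ is smooth, so $V=af'+bJ(f)f'\in\mathcal{C}^{1,\alpha}(\overline{\mathbb{D}})$. The loss of one derivative (from $\mathcal{C}^{2,\alpha}$ to $\mathcal{C}^{1,\alpha}$) is exactly why the hypothesis on $f$ is one order higher than the conclusion on $V$.

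For the core computation I would set $p:=f_x=f'$ and $q:=f_y$. The $J$-holomorphicity (\ref{JvR}) reads $p+J(f)q=0$, so $q=J(f)p$ and $J(f)q=-p$; in particular $V=ap+bq$. Differentiating $p+J(f)q=0$ in $x$ and in $y$ (here $f\in\mathcal{C}^{2,\alpha}$ is used) yields $p_x+d_fJ(p)q+J(f)q_x=0$ and $p_y+d_fJ(q)q+J(f)q_y=0$. I would then compute $V_x+J(f)V_y$ term by term, substituting these two relations together with $J(f)p=q$, $J(f)q=-p$, the equality of mixed partials $p_y=q_x$, and the Cauchy--Riemann equations $a_x=b_y$, $a_y=-b_x$ for $\phi$. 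After cancellation every first-order term in $p,q$ drops out and one is left with $V_x+J(f)V_y=-\left(a\,d_fJ(p)+b\,d_fJ(q)\right)q$.

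Finally, invoking the linearity of the linearization $d_fJ$ in its argument, $a\,d_fJ(p)+b\,d_fJ(q)=d_fJ(ap+bq)=d_fJ(V)$, so the identity becomes $V_x+J(f)V_y+d_fJ(V)f_y=0$, which is precisely (\ref{VF}). The argument is a pointwise identity among continuous functions, so it holds on $\mathbb{D}$ and extends to $\overline{\mathbb{D}}$ by continuity. The only real obstacle is the bookkeeping in the cancellation step; the genuine structural inputs are the once-differentiated holomorphicity relations and the linearity of $d_fJ$.
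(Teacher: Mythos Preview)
Your proposal is correct and follows essentially the same route as the paper: both differentiate the $J$-holomorphicity relation $f_x+J(f)f_y=0$ in $x$ and $y$ (equivalently, observe that $f_x$ and $f_y$ are themselves variational vector fields along $f$), then use the Cauchy--Riemann equations for $\phi$ together with $J(f)f_x=f_y$, $J(f)f_y=-f_x$ to see that $D(af_x+bf_y)=0$. The paper's write-up is just a more compact version of your computation, packaging the second-order terms as $aD(f_x)+bD(f_y)=0$ rather than substituting the differentiated relations one at a time.
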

\begin{proof}
Recall from (\ref{VF}) that $f_y=J(f)f'$ and that the equation defining a $J$-holomorphic variational vector field $V$ along $f$ can be written in the form
$$D(V)=V_x+J(f)V_y+d_fJ(V)f_y=0.$$
Note that both, $f_x$ and $f_y$, are $J$-holomorphic variational vector fields along $f$ and therefore we have $D(f_x)=D(f_y)=0$. Together with $a_x=b_y$ and $a_y=-b_x$ this yields that 
$$D(af_x+bf_y)=a_xf_x + a_yJ(f)f_x+ b_xf_y+b_y J(f)f_y=0.$$
Since $J(f)^2=-id$ this completes the proof.
\end{proof}

\begin{coro}\label{varia-thm}
Let $f\in \mathcal{C}^{1,\alpha}(\overline{\D})$ be a $J$-holomorphic disc and let $V\in\mathcal{C}^{1,\alpha}(\overline{\D})$ be a $J$-holomorphic variational vector field along $f$ and satisfying $V(0)=0$. There exists a family $(f_t)\subset \mathcal{C}^{1,\alpha}(\overline{\D})$ of $J$-holomorphic discs such that $f_0=f$, $f_t(0)=f(0)$, $\frac{\mathrm{d}}{\mathrm{d}t}_{|_{t=0}}f_t=V$ and $f_t'(0)=f'(0)+tV'(0).$ 
\end{coro}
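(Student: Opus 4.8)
The plan is to upgrade the purely first–order information that Theorem 1 produces into the exact $1$-jet conditions at the origin, by working in the local chart for the space of $J$-holomorphic discs that $\tilde{\mathcal{F}}$ provides. Recall from Lemma \ref{lemma4} that near $f$ the operator $\tilde{\mathcal{F}}$ is a $\mathcal{C}^1$-diffeomorphism carrying $J$-holomorphic discs to standard holomorphic vector functions; writing $h_0=\tilde{\mathcal{F}}(f)$ and letting $\mathcal{O}$ denote the (closed) subspace of holomorphic maps in $\mathcal{C}^{1,\alpha}(\ol{\D})$, every disc close to $f$ is uniquely $g=\tilde{\mathcal{F}}^{-1}(h)$ with $h\in\mathcal{O}$ near $h_0$, and $d_f\tilde{\mathcal{F}}^{-1}$ is an isomorphism from $\mathcal{O}$ onto the space of $J$-holomorphic variational fields along $f$. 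I emphasize that one cannot simply reparametrize the family of Theorem 1 by discs automorphisms à la Lemma \ref{varia-lem}: a reparametrization keeps the centre $g_t(0)$ inside the image of $g_t$, which for $n>1$ need not contain the prescribed point $f(0)$ once $t\neq0$, so the disc itself must be genuinely deformed.

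Next I would set up the constraint as a finite–dimensional equation on $\mathcal{O}$. Define the $\mathcal{C}^1$ evaluation map $E\colon\mathcal{O}\to\R^{4n}$ by $E(h)=\bigl(\tilde{\mathcal{F}}^{-1}(h)(0),\,\partial_x\tilde{\mathcal{F}}^{-1}(h)(0)\bigr)$ and the target curve $\gamma(t)=\bigl(f(0),\,f'(0)+tV'(0)\bigr)$. Putting $W_0:=d_f\tilde{\mathcal{F}}(V)\in\mathcal{O}$, the differential satisfies $dE_{h_0}(\dot h)=(W(0),W'(0))$ with $W=d_f\tilde{\mathcal{F}}^{-1}(\dot h)$, so in particular $dE_{h_0}(W_0)=(V(0),V'(0))=(0,V'(0))=\dot\gamma(0)$, where I use the hypothesis $V(0)=0$. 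Thus the prescribed velocity and the target curve are already compatible to first order, and the problem reduces to lifting $\gamma$ through $E$ with initial velocity $W_0$.

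The hard part, and the main obstacle, is to show that $dE_{h_0}\colon\mathcal{O}\to\R^{4n}$ is surjective, i.e.\ that $J$-holomorphic variational fields realize arbitrary $1$-jets at the origin. Lemma \ref{varia-lem} alone is insufficient, since the reparametrization fields $\phi\cdot f'$ only span the two real dimensions $\mathrm{Span}_\R\{f'(0),J(f(0))f'(0)\}$ of admissible $V'(0)$ and force $V(0)=0$. To obtain the full $\R^{4n}$ I would invoke the theory underlying equation (\ref{VFB}): by the similarity principle for generalized analytic vectors one writes a solution $W$ as $W=C\cdot H$ with $H$ holomorphic and $C$ an invertible $\mathcal{C}^1$ matrix, so that $(W(0),W'(0))$ is an invertible linear image of the freely prescribable jet $(H(0),H'(0))$; see \cite{BOJARSKI,ST}. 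This yields surjectivity of $dE_{h_0}$.

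Granting the submersion property, I would finish by a standard implicit–function argument. Choose a closed complement $L$ of $\ker dE_{h_0}$, so that $dE_{h_0}|_L\colon L\to\R^{4n}$ is an isomorphism, and look for $h_t=h_0+tW_0+\eta(t)$ with $\eta(t)\in L$; the equation $E(h_0+tW_0+\eta)=\gamma(t)$ is then solvable for $\eta=\eta(t)$ with $\eta(0)=0$. Differentiating at $t=0$ gives $dE_{h_0}(W_0+\dot\eta(0))=\dot\gamma(0)=dE_{h_0}(W_0)$, whence $\dot\eta(0)\in L\cap\ker dE_{h_0}=\{0\}$, so the correction is genuinely second order. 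Setting $f_t:=\tilde{\mathcal{F}}^{-1}(h_t)$ produces $J$-holomorphic discs with $f_0=f$, with $\frac{\mathrm{d}}{\mathrm{d}t}_{|_{t=0}}f_t=d_f\tilde{\mathcal{F}}^{-1}(W_0)=V$, and with $E(h_t)=\gamma(t)$, i.e.\ $f_t(0)=f(0)$ and $f_t'(0)=f'(0)+tV'(0)$, as required.
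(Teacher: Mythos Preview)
Your approach is sound in outline but differs substantially from the paper's, which is considerably more direct. Rather than keeping the operator $\tilde{\mathcal{F}}$ from Theorem~1 and imposing the $1$-jet conditions at $0$ as an external constraint via a second implicit-function argument, the paper rebuilds $\tilde{\mathcal{F}}$ so that it \emph{exactly} preserves the $1$-jet at the origin. Concretely, one replaces the Cauchy--Green operator $T$ by its normalization
\[
T_0(u)(\zeta)=T(u)(\zeta)-T(u)(0)-\zeta\,[T(u)]_\zeta(0),
\]
so that the resulting $\mathcal{F}$ satisfies $\mathcal{F}(g)(0)=g(0)$ and $[\mathcal{F}(g)]'(0)=g'(0)$ identically; the holomorphic correctors $h_j$ in Lemma~\ref{lemma4} are then chosen with $h_j(0)=h_j'(0)=0$. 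With this modified $\tilde{\mathcal{F}}$ the very formula $f_t=\tilde{\mathcal{F}}^{-1}\bigl(\tilde{\mathcal{F}}(f)+t\,d_f\tilde{\mathcal{F}}(V)\bigr)$ from the proof of Theorem~1 already gives $f_t(0)=f(0)+tV(0)=f(0)$ and $f_t'(0)=f'(0)+tV'(0)$ automatically, with no correction term $\eta(t)$ and no separate surjectivity argument.

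Your route, by contrast, buys flexibility (it would adapt to other finite-dimensional constraints) at the cost of the surjectivity step for $dE_{h_0}$. There your use of the similarity principle needs care: in the vector-valued case with a conjugate term $B_2\bar V$, the representation $W=C\cdot H$ has $C$ depending on the particular solution $W$, so one cannot simply treat $(H(0),H'(0))$ as a freely prescribable parameter producing a solution $W=CH$. The surjectivity you need is nonetheless true, and in fact the paper's $T_0$-normalization furnishes the cleanest proof: the corresponding $d_f\tilde{\mathcal{F}}$ is an isomorphism from variational fields to holomorphic maps that preserves the $1$-jet at $0$, so variational fields realise arbitrary $1$-jets there.
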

\begin{proof} As said, this is just a refined version of the Theorem 1. Indeed, as mentioned, for $J(f)=J_{st}$ we have $f'(0)=f_x(0)=f_{\zeta}(0)$. Hence we can repeat the same proof by using the following normalization of the Cauchy Green operator
$$T_0(f)(\zeta)=T(f)(\zeta)-T(f)(0)-\zeta\left[T(f)\right]_\zeta(0).$$
This yields that $\mathcal{F}$ defined as in (\ref{OPF}) but with $T_0$ satisfies $\mathcal{F}(f)(0)=f(0)$ and $[\mathcal{F}(f)]'(0)=f'(0)$. Moreover, it can be made locally invertible again by calculating a new adjoint operator of $d_f\mathcal{F}$ and finding $h_1,\ldots,h_N$ that span the complement of $\textrm{Range}(d_f\mathcal{F})$ and satisfy $h_j(0)=0$ and $h_j'(0)=0$.   
\end{proof}

\begin{proof}[Proof of Theorem \ref{main-thm}]
We follow the proof in \cite{gau-joo}. Assume that $f$ is not proper. Then there exists an open interval $P\subset \partial\D$ such that $f(P)\cap \partial\Omega=\emptyset.$ The idea is to shrink the domain of $f$ in order to asure that its perturbations will stay in $\Omega$ but, on the other hand, enlarge their values on $P$ so that they will contradict the extremality of $f$. 

Given $r\in[\frac{1}{2},1]$ we define $f_r(\zeta):=f(r\zeta)$. Let $K\subset\Omega$ be a compact set such that $f_r(\zeta)\in K$ for every $\zeta\in P$ and every $r\in[\frac{1}{2},1]$. For $R>0$ that will be fixed later we consider a smooth function $\chi_R\colon \partial\D\to \R$ compactly supported in $P$ and such that $\chi_R\equiv R$ on a slightly smaller closed subinterval  $P_1\subset P$. Using the Poisson kernel we construct a standard holomorphic function $\phi_R$ defined on $\D$ and continus up to the boundary with properties $\textrm{Im}(\phi_R)(0)=0$ and $\textrm{Re}(\phi_R) = \chi_R$ on $\partial\D.$ By Lemma \ref{varia-lem} the vector function $$V_r^R(\zeta):=(\zeta\;\textrm{exp}(\phi_R(\zeta))\cdot f_r'(\zeta)\in\mathcal{C}^{1,\alpha}(\overline{\D})$$ 
is a $J$-holomorphic variational vector field along $f_r$. Thus, by Corollary~\ref{varia-thm}, there exists a family $(h_{r,t}^R)_t$ of $J$-holomorphic discs such that
$$
\left\{
\begin{array}{l}
h_{r,0}^R(0) =  f_r(0)=f(0)\\
 \\
h_{r,t}^{R}\, ' (0)=f_r'(0)+t\;V_r^R\,'(0)=r(1+t\;\mathrm{exp}(\phi_R(0)))f'(0).
\end{array}
\right.
$$ 
Note that one can assume that these families are defined for $|t|<t_0(R)$. That is, this bound does not depend on $r\in[\frac{1}{2},1]$. Indeed, using a rescaling argument for the family $f_r$, the constants $C$ and $\epsilon_0$ in Lemma \ref{lemma4} and Lemma \ref{lemma5} can be chosen uniformly for all $f_r,$ $r\in[\frac{1}{2},1].$

Let $l$ denote the length of $P_1$. Then $\mathrm{exp}(\phi_R(0))\geq \mathrm{exp}(lR/2\pi)$. Hence if 
$$t>\frac{1-r}{r}\textrm{exp}(-lR/2\pi):=t_1(r,R)$$
we have
\begin{equation*}
(h_{r,t}^{R})'(0)=\lambda f'(0) \ \ {\rm for \ some}\ \lambda>1.
\end{equation*}
That is, if this holds for some $0<t<t_0(R)$ such that $h_{r,t}^R(\D)\subset\Omega$ this yields a contradiction to the extremality of $f$. Hence, let us seek the condition under which the image of $h_{r,t}^R$ is in $\Omega$.

First note that $\rho\circ f_r$ is negative on $\D$. Hence, there is $d(K,\rho)>0$ such that for $r\in[\frac{1}{2},1]$ and $\zeta\in P$ we have  
$$\rho\circ f_r(\zeta)\leq -d(K,\rho).$$
Moreover, there exist positive constants $C_1(R)>0$ and $t_1(R)>0$ such that for every $|t|<t_1(R)$ and every $r\in [\frac{1}{2},1]$ we have 
$$|\rho\circ h_{r,t}^R(\zeta)-\rho\circ f_r(\zeta)|\leq C_1(R) |t|.$$
Hence, if $|t|<\textrm{min}\left\{t_1(R),d(K,\rho)/C_1(R)\right\}$, we have $h_{r,t}^R(\zeta)\in\Omega$ for $\zeta\in P$.

Next, due to the subharmonicity of $\rho\circ f_r$ there is $C_2>0$ depending only on $\rho$ and $f(0)$ such that for every $\zeta\in\D$ and every $r\in[0,1)$ we have
$$\rho\circ f_r(\zeta)\leq -C_2(1-|\zeta|)<-C_2(1-r).$$
Since $\textrm{Re}(\phi_R)\equiv 0$ on $\partial\D\backslash P$ there is a constant $C_3>0$ independent of $R>0$ and a constant $t_2(R)>0$ such that for every $r\in[\frac{1}{2},1]$, $\zeta\in\partial\D\backslash P$  and $|t|<t_2(R)$ we have:
$$
|\rho\circ h_{r,t}^R(\zeta)-\rho\circ f_r(\zeta)|\leq C_3 |t|.
$$
Therefore, if $|t|\leq C_2(1-r)/C_3:=t_3(r)$ and $\zeta\in\partial\mathbb{D}\setminus P$ then $h_{r,t}^R(\zeta)\in\Omega$. 

Finaly, we fix $R>0$ such that
$$\exp\left(-\frac{lR}{2\pi}\right)<\frac{C_2}{2C_3}.$$ 
Then $t_1(r,R)<t_3(r)$. Moreover, we can take $r$ sufficiently close to $1$ so that $t_3(r)<\textrm{min}\left\{t_0(R),t_1(R),d(K,\rho)/C_1(R),t_2(R)\right\}.$ Thus for such $R$ and a parameter $t$ satisfying $t_1(r,R)<t<t_3(r)$ the disc $h_{r,t}^R$ is well defined, maps $\mathbb{D}$ into $\Omega$ and yields the desired contradiction.  
\end{proof}

\noindent\textbf{Final remark.} It was proved in \cite[Theorem 2.31]{PANG} that every $\mathcal{C}^2$-closed extremal disc mapping into a strongly pseudoconvex domain vanishes certain first order variations determined by the boundary function. Based on this property, a family of $J$-stationary discs was introduced in \cite{gau-joo}. As explained earlier, when dealing with non-integrable structures, such a necessarry condition for extremal discs is more appropriate than the geometric one provided by Tumanov \cite{TUMANOV}. In particular, the authors provide an explicit example of a stationary disc which admits such a required meromorphic lift but is not extremal. In contrast, using the variational approach, every extremal disc is $J$-stationary. However, as mentioned, their theory is restricted to small perturbations of $J_{st}$ for which Theorem $1$ can be obtained from the holomorphic case by using the Implicit function theorem. That is, this paper provides the main technical tool needed in order to remove this restriction.
\vskip 0.2 cm
\noindent  {\it Acknowledgments.}  Research of the author was supported in part by grants P1-0291, J1-9104 and BI-US/19-21-108 from ARRS, Republic of Slovenia. He would also like to thank Herve Gaussier and Franc Forstneri\v{c} for their useful comments while he was preparing the paper.

\end{document}